\numberwithin{equation}{section}
\theoremstyle{plain}
\newtheorem{thm}{Theorem}[section]
\newtheorem{lemma}[thm]{Lemma}
\newtheorem{prop}[thm]{Proposition}
\theoremstyle{definition}
\newtheorem{remark}[thm]{Remark}
\begin{document}

\title{Support Varieties For Frobenius Kernels of Classical Groups}

\author[Paul Sobaje]
{Paul Sobaje}

\begin{abstract}

\sloppy{
Let $G$ be a classical simple algebraic group over an algebraically closed field $k$ of characteristic $p>0$, and denote by $G_{(r)}$ the $r$-th Frobenius kernel of $G$.  We show that for $p$ large enough, the support variety of a simple $G$-module over $G_{(r)}$ can be described in terms of support varieties of simple $G$-modules over $G_{(1)}$.  We use this, together with the computation of the varieties $V_{G_{(1)}}(H^0(\lambda))$, given by Jantzen in \cite{J2} and by Nakano et al. in \cite{NPV}, to explicitly compute the support variety of a block of $Dist(G_{(r)})$.
}

\end{abstract}

\maketitle

The aim of this paper is to provide computations of support varieties for modules over Frobenius kernels of algebraic groups.  Specifically, for $G$ a classical simple algebraic group over an algebraically closed field $k$ of characteristic $p>0$, we give a description (Theorem \ref{simple}) of the support variety of a simple $G$-module over the $r$-th Frobenius kernel $G_{(r)}$ in terms of the support varieties of simple $G$-modules over $G_{(1)}$.  Our proofs establish these results only under the assumption that $p$ is large enough for the root system of $G$.  A lower bound on $p$ is provided in Section 3, roughly speaking it is the Coxeter number of $G$ multiplied by a quadratic polynomial in the rank of $G$.  In Section 4, we apply this result for $G = SL_n$ or $Sp_{2n}$, to give an explicit description of the support variety of a block of the distribution algebra $Dist(G_{(r)})$.

We should emphasize that the varieties computed in Section 3 can only be determined explicitly (by our results) if the support varieties of simple $G$-modules over $G_{(1)}$ are known explicitly, which is in general not the case.  However, C. Drupieski, D. Nakano, and B. Parshall have in recent work \cite{DNP} made such calculations for simple, simply-connected $G$, if one assumes that $p$ is at least as large as the Coxeter number of $G$ and that Lusztig's character formula holds for all restricted dominant weights.

The results in this paper rely most heavily on the work of A. Suslin, E. Friedlander, and C. Bendel in \cite{SFB1} and \cite{SFB2}.  In particular, all of our statements of support varieties are given in terms of varieties of 1-parameter subgroups, which the aforementioned papers prove to be homeomorphic to cohomologically defined support varieties.  Moreover, the intuition behind our results for simple modules came from the calculations made in \cite[6.10]{SFB2} for Frobenius kernels of $SL_2$.  We also use in an essential way the analysis and results of J. Carlson, Z. Lin, and D. Nakano in \cite{CLN}, and that of E. Friedlander in \cite{F}, both of which appear in the proof of Proposition \ref{first}.  Finally, the results of J. Jantzen in \cite{J2}, and the results and observations of D. Nakano, B. Parshall, and D. Vella in \cite{NPV} are critical to obtaining the calculations found in Section 4, where we compute the support variety of a block of $Dist(G_{(r)})$.

\section{Preliminaries}

We will assume throughout that $k$ is an algebraically closed field of characteristic $p>0$.

\subsection{Representations of $G$} By a ``classical" simple algebraic group, we shall mean that $G$ is one of the groups $SL_n, SO_n$, or $Sp_{2n}$ (thus excluding the simply-connected groups of types $B$ and $D$).  When viewing $G$ as a subgroup of some $GL_n$, we will always assume this embedding is the ``natural" one associated to $G$.

Let $T$ be a maximal split torus of $G$ with character group $X(T)$, let $\Phi$ be the root system for $G$ with respect to $T$, and fix a set of simple roots $\Pi = \{\alpha_1,\ldots,\alpha_{\ell}\}$.  Denote by $\Phi^+$ the set of positive roots with respect to $\Pi$, and let $B^+$ and $B$ denote the Borel subgroups corresponding to $\Phi^+$ and $-\Phi^+$, with their unipotent radicals denoted as $U^+, U$ respectively.  The Weyl group of $\Phi$ will be denoted $W$, and the dot action of $w \in W$ on $\lambda \in X(T)$ is defined by $w \cdot \lambda = w(\lambda + \rho) - \rho$, where $\rho$ is the half sum of the positive roots.  We also denote by $\alpha_0$ the highest root.

We let $\alpha^{\vee} = 2\alpha/\langle \alpha, \alpha \rangle$ for all roots $\alpha$.  The dominant integral weights of $X(T)$ are then given by

\vspace{0.05 in}
\begin{center}$X(T)_+ := \{ \lambda \in X(T) \mid 0 \leq \langle \lambda, \alpha_i^{\vee} \rangle, 1 \leq i \leq \ell \}.$\end{center}
\vspace{0.05 in}

The set of fundamental dominant weights, $\{ \omega_1, \ldots, \omega_{\ell} \}$, is defined by $\langle \omega_i, \alpha_j^{\vee} \rangle = \delta_{ij}$.  For each $\lambda \in X(T)_+$, we denote by $L(\lambda)$ the unique simple $G$-module of highest weight $\lambda$.  It is the socle of the induced module $\text{H}^0(\lambda) := Ind_B^G(k_{\lambda})$, where $k_{\lambda}$ is the simple one-dimensional $B$-module of weight $\lambda$.  The morphism $F: G \rightarrow G$ is the standard Frobenius morphism on $GL_n$ restricted to $G$, and $G_{(r)} \subseteq G$ is the kernel of $F^r$.  For a $G$-module $M$, we denote by $M^{(r)}$ the module which arises from pulling back $M$ via $F^r$.  The set of $p^r$-restricted weights of $X(T)$ is given by

\vspace{0.05 in}
\begin{center}$X_r(T) := \{ \lambda \in X(T) \mid 0 \leq \langle \lambda, \alpha_i^{\vee} \rangle < p^r, 1 \leq i \leq \ell \}.$\end{center}
\vspace{0.05 in}

As shown in \cite[II.3]{J1}, if $\lambda \in X_r(T)$, then $L(\lambda)$ remains simple upon restriction from $G$ to $G_{(r)}$.  Moreover, for $G$ simply-connected, the set 

\vspace{0.05 in}
\begin{center}$\{ L(\lambda) \mid \lambda \in X_r(T) \}$,\end{center}
\vspace{0.05 in}

\noindent is a complete set of pairwise non-isomorphic simple $G_{(r)}$-modules.

\vspace{0.05 in}
\subsection{Distribution Algebras}
If $H$ is any affine group scheme, with coordinate algebra $k[H]$, and $I_{\epsilon}$ the augmentation ideal of $k[H]$, then the distribution algebra of $H$, $Dist(H)$, is defined by

\vspace{0.05 in}
\begin{center}$Dist(H) = \{ f \in \text{Hom}_k(k[H],k) \mid f(I_{\epsilon}^n) = 0, \text{ for some } n \ge 1\}$.\end{center}
\vspace{0.05 in}

\noindent It follows that $Dist(H_{(r)}) \subseteq Dist(H_{(r+1)})$, and $Dist(H) = \bigcup_{r \ge 1} Dist(H_{(r)})$ (see \cite[I.9]{J1} for more on Frobenius kernels of arbitrary affine group schemes).  For a morphism of affine group schemes $\phi: H_1 \rightarrow H_2$, we denote by $d\phi: Dist(H_1) \rightarrow Dist(H_2)$ the induced map of algebras.

Of particular importance will be the structure of the algebra $Dist(\mathbb{G}_a)$.  In this case, we have $k[\mathbb{G}_a] \cong k[t]$, and $Dist(\mathbb{G}_a)$ is spanned by the elements $(\frac{d}{dt})^{(j)}$, where

$$\left(\frac{d}{dt}\right)^{(j)}(t^i) = \delta_{ij}$$

If we set $u_j = (\frac{d}{dt})^{(p^j)}$, and if $m$ is an integer with $p$-adic expansion $m = m_0 + m_1p + \cdots + m_qp^q$, then

$$\left(\frac{d}{dt}\right)^{(m)} = \frac{u_0^{m_0}\cdots u_q^{m_q}}{m_0!\cdots m_q!}$$

Therefore $Dist(\mathbb{G}_a)$ is generated as an algebra over $k$ by the set $\{u_j\}_{j \ge 0}$, while $Dist(\mathbb{G}_{a(r)})$ is generated by the subset where $j<r$.

With $F^i$ denoting the $i$-th iterate of the Frobenius morphism as above, we have that the differential $dF^i: Dist(\mathbb{G}_a) \rightarrow Dist(\mathbb{G}_a)$ is given by

$$dF^i(u_j) = \begin{cases} u_{j-i} & \text{ if } j \ge i \\ 0 & \text{ otherwise}\\ \end{cases} $$

Let $\delta: \mathbb{G}_a \rightarrow \mathbb{G}_a \times \mathbb{G}_a$ be the morphism which sends $g$ to $(g,g)$, for all $g \in \mathbb{G}_a(A)$, and for all commutative $k$-algebras $A$.  Then the differential of $\delta$ is the co-multiplication of $Dist(\mathbb{G}_a)$ (see \cite[I.7.4]{J1}), so we will write $d\delta$ as $\Delta_{\mathbb{G}_a}^{\prime}$.  It is not hard to show that  $\Delta_{\mathbb{G}_a}^{\prime}: Dist(\mathbb{G}_a) \rightarrow Dist(\mathbb{G}_a) \otimes Dist(\mathbb{G}_a)$, is given by 

$$\Delta_{\mathbb{G}_a}^{\prime} \left( \left( \frac{d}{dt} \right)^{(n)} \right) = \sum_{i+j = n} \left( \frac{d}{dt} \right)^{(i)} \otimes \left( \frac{d}{dt} \right)^{(j)}.$$

\bigskip
\subsection{Support Varieties}
We recall that the Frobenius kernel $H_{(r)}$ has finite dimensional coordinate algebra $k[H_{(r)}]$, and thus is a \textit{finite group scheme}.  By \cite{FS}, we have then that the algebra

$$\text{H}^{\bullet}(H_{(r)},k) := \begin{cases} \bigoplus_{i \ge 0} \text{H}^i(H_{(r)},k) & \text{if char $k = 2$} \\  \bigoplus_{i \ge 0} \text{H}^{2i}(H_{(r)},k) & \text{if char $k \ne 2$} \end{cases}$$

\noindent is a finitely generated commutative algebra over $k$.  Denote by $|H_{(r)}|$ the maximal ideal spectrum of $\text{H}^{\bullet}(H_{(r)},k)$.  For $M$ a finitely generated $H_{(r)}$-module, the cohomological support variety of $M$, denoted $|H_{(r)}|_M$, is defined to be set of all $\mathfrak{m} \in |H_{(r)}|$ that contain the annihilator in $\text{H}^{\bullet}(H_{(r)},k)$ of $\text{Ext}_{H_{(r)}}^{\bullet}(M,M)$, where the latter space is considered as a module over the former via the cup product.

In this paper we will study these varieties via the non-cohomological description provided in \cite{SFB2}.  Denote by $V_{H_{(r)}}$ the set $\text{Hom}_{GrpSch/k}(\mathbb{G}_{a(r)}, H_{(r)})$, which is the variety of 1-parameter subgroups of $H_{(r)}$.  For $M \in H_{(r)}$-mod, let $V_{H_{(r)}}(M)$ be the set of those $\sigma \in V_{H_{(r)}}$ having the property that $M$ is not projective as a $k[u]/(u^p)$-module, where the action of $u$ is given by $d\sigma(u_{r-1})$.  The following theorem is proved in \cite{SFB2}, although our formulation is more easily seen in \cite[Theorem 1.9]{FP} (and the discussion preceding it).

\begin{thm}\label{alternate}
There is a natural homeomorphism $\Psi: V_{H_{(r)}}  \stackrel{\sim}{\rightarrow} |H_{(r)}|$ such that $V_{H_{(r)}}(M) = \Psi^{-1}(|H_{(r)}|_M)$.
\end{thm}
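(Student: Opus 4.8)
The statement is the main theorem of \cite{SFB1} and \cite{SFB2}, repackaged along the lines of \cite[Theorem 1.9]{FP}, and my plan is to reconstruct that proof in three stages: first equip $V_{H_{(r)}}$ with a scheme structure and build a natural map from cohomology to its functions; then show that map is an $F$-isomorphism, which produces the homeomorphism $\Psi$; and finally match $V_{H_{(r)}}(M)$ with $|H_{(r)}|_M$ point by point.

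\textbf{Stage 1 (the scheme $V_{H_{(r)}}$ and the map $\psi$).} By \cite{SFB1}, the set $V_{H_{(r)}}$ is the set of $k$-rational points of an affine $k$-scheme of finite type, which I denote the same way, and which carries a $\mathbb{G}_m$-action rescaling 1-parameter subgroups, so that $k[V_{H_{(r)}}]$ is graded. I would then construct a natural homomorphism of graded $k$-algebras $\psi_{H_{(r)}}\colon \mathrm{H}^{\bullet}(H_{(r)},k)\to k[V_{H_{(r)}}]$, up to the Frobenius twist and degree conventions of \cite{SFB2}. The case $H=GL_n$ comes first: there \cite{SFB1} identifies $V_{GL_{n(r)}}$ with the scheme of $r$-tuples $(\beta_0,\dots,\beta_{r-1})$ of pairwise commuting $p$-nilpotent $n\times n$ matrices via $\sigma\mapsto(d\sigma(u_0),\dots,d\sigma(u_{r-1}))$, and \cite{SFB2} builds the universal cohomology classes needed for $\psi_{GL_{n(r)}}$ out of the explicitly computed additive generators of $\mathrm{H}^{\bullet}(GL_{n(r)},k)$. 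For general $H$ I would fix a closed embedding $\iota\colon H\hookrightarrow GL_n$, define $\psi_{H_{(r)}}$ compatibly with $\psi_{GL_{n(r)}}$ along $\iota^{*}\colon\mathrm{H}^{\bullet}(GL_{n(r)},k)\to\mathrm{H}^{\bullet}(H_{(r)},k)$ and $\iota_{*}\colon V_{H_{(r)}}\to V_{GL_{n(r)}}$, and then check independence of the embedding and naturality in $H_{(r)}$.

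\textbf{Stage 2 ($\psi$ is an $F$-isomorphism, hence $\Psi$).} The core is to show that $\ker\psi_{H_{(r)}}$ is nilpotent and that every element of $k[V_{H_{(r)}}]$ has a $p$-th power in the image of $\psi_{H_{(r)}}$. For $GL_n$ this is immediate from the explicit description of \cite{SFB1}. For general $H$ it is the assertion that the cohomology of $H_{(r)}$ is detected, modulo nilpotents, on 1-parameter subgroups; \cite{SFB2} reduces this to the $GL_n$ case through the closed embedding, together with May-spectral-sequence arguments and a Quillen-type stratification of $\mathrm{Spec}\,\mathrm{H}^{\bullet}(H_{(r)},k)$. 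Granting this, $\psi_{H_{(r)}}$ induces a homeomorphism on maximal ideal spectra (as always for $F$-isomorphisms of finitely generated commutative $k$-algebras), and I take $\Psi$ to be its inverse; naturality is inherited from Stage 1. I expect the genuine difficulty to lie here: this is precisely the deep content of \cite{SFB1}--\cite{SFB2}, and rather than reprove it I would cite it.

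\textbf{Stage 3 (matching support varieties).} Fix $M$ finitely generated over $H_{(r)}$. Both $|H_{(r)}|_M$ and $V_{H_{(r)}}(M)$ are $\mathbb{G}_m$-stable closed subsets (the former cut out by the homogeneous ideal $\sqrt{\mathrm{Ann}\,\mathrm{Ext}^{\bullet}_{H_{(r)}}(M,M)}$), so since $\Psi$ is an equivariant homeomorphism it suffices to check, for each $\sigma\in V_{H_{(r)}}$, that $\Psi(\sigma)\in |H_{(r)}|_M$ if and only if $M$ is not free over $k[u]/(u^{p})$ with $u=d\sigma(u_{r-1})$. The bridge is the compatibility, forced by naturality of $\psi$, between the evaluation functional $\mathrm{H}^{\bullet}(H_{(r)},k)\to k$ at $\Psi(\sigma)$ and the composite of $\sigma^{*}\colon\mathrm{H}^{\bullet}(H_{(r)},k)\to\mathrm{H}^{\bullet}(\mathbb{G}_{a(r)},k)$ with the specialization picking out the polynomial generator attached to $u_{r-1}$ under $\mathrm{H}^{\bullet}(\mathbb{G}_{a(r)},k)^{\mathrm{red}}\cong k[\mathbb{A}^{r}]$. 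Hence $\Psi(\sigma)\in |H_{(r)}|_M$ precisely when that generator fails to annihilate the image of $\mathrm{Ext}^{\bullet}_{H_{(r)}}(M,M)$ in $\mathrm{Ext}^{\bullet}_{k[u]/(u^{p})}(M,M)$, and by the elementary computation of $\mathrm{Ext}^{\bullet}_{k[u]/(u^{p})}$ (non-projectivity is detected by non-torsion self-$\mathrm{Ext}$ over the degree-two generator) this occurs exactly when the $k[u]/(u^{p})$-module structure on $M$ given by $u=d\sigma(u_{r-1})$ is not free. Combining these equivalences over all $\sigma$ gives $V_{H_{(r)}}(M)=\Psi^{-1}(|H_{(r)}|_M)$; alternatively the comparison can be run in the $\pi$-point language of \cite{FP}. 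This stage is essentially bookkeeping once Stage 2 and the naturality of $\psi$ are in hand.
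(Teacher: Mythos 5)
The paper does not prove Theorem~\ref{alternate}; it simply states it as a recalled result, citing \cite{SFB2} for the theorem and \cite[Theorem~1.9]{FP} for its formulation, and moves on. So there is no in-paper argument to compare your write-up against. Your three-stage outline is a reasonable and essentially accurate reconstruction of where the proof actually lives in those references: the scheme structure on $V_{H_{(r)}}$ and the map $\psi$ are from \cite{SFB1}, the $F$-isomorphism (detection modulo nilpotents on infinitesimal one-parameter subgroups) is the main theorem of \cite{SFB2} and is, as you say, the deep ingredient that one cites rather than reproves, and the identification of the two notions of support variety for a module $M$ is the closing chapter of \cite{SFB2}, repackaged in $\pi$-point language by \cite{FP}. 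You correctly flag that Stage~2 is where the genuine mathematical content sits and that you would defer to the literature there, which mirrors exactly what the paper itself does, just with more scaffolding around the citation.

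One small imprecision worth tightening if you ever had to write this out: in Stage~3 the ``polynomial generator attached to $u_{r-1}$'' is doing a lot of work. The actual mechanism in \cite{SFB2} runs through the universal extension class $e_r \in \mathrm{H}^{2p^{r-1}}(\mathbb{G}_{a(r)},k)$ and its naturality, and the statement that the evaluation at $\Psi(\sigma)$ factors as you describe needs the compatibility of $\psi_{\mathbb{G}_{a(r)}}$ with the chosen coordinates on $V_{\mathbb{G}_{a(r)}} \cong \mathbb{A}^r$; this is true but is a lemma, not automatic. Also, your parenthetical about non-projectivity being detected by a ``degree-two generator'' presumes $p$ odd; the paper's grading convention for $\mathrm{H}^{\bullet}$ already handles $p=2$ separately, so this is consistent, just worth stating. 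Neither point is a gap in the sense of the theorem failing -- both you and the paper are, appropriately, citing, and your account of the cited proof is sound.
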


Since we are assuming that $G$ is a classical simple algebraic group, there is in this case a very nice description of the variety $V_{G_{(r)}}$.  As shown in \cite[Remark 1.3]{SFB1}, for any $x \in \mathfrak{gl}_n$ with $x^p = 0$, there is a morphism of algebraic groups $\text{exp}_x: \mathbb{G}_a \rightarrow GL_n$, where for all commutative $k$-algebras $A$, and all $s \in \mathbb{G}_a(A)$, we have

$$\text{exp}_x(s) = exp(sx) = 1 + sx + \frac{(sx)^2}{2} + \cdots + \frac{(sx)^{p-1}}{(p-1)!}$$

Let $\mathfrak{g} = Lie(G)$.  As $G$ is a subgroup of $GL_n$ via the representation of its natural module, $\mathfrak{g} \subseteq \mathfrak{gl}_n$.  Moreover, this embedding of $G$ is one of \textit{exponential type}, meaning that if $x \in \mathfrak{g}$ is $p$-nilpotent, then the map $\text{exp}_x$ defined above factors through $G$ (see \cite[Lemma 1.8]{SFB1}).  Now let $\mathcal{N}_p(\mathfrak{g})$ denote the set of those $x  \in \mathfrak{g}$ such that $x^{[p]} = 0$, and let

\vspace{0.05 in}
\begin{center}$C_r(\mathcal{N}_p(\mathfrak{g})) := \{ (\beta_0, \beta_1, \ldots, \beta_{r-1}) \mid \beta_i \in \mathcal{N}_p(\mathfrak{g}), [\beta_i, \beta_j] = 0 \} $.\end{center}
\vspace{0.05 in}

By \cite[Lemma 1.7]{SFB1} the variety $V_{G_{(r)}}$ can be naturally identified with $C_r(\mathcal{N}_p(\mathfrak{g}))$ (this is true for any subgroup of $GL_n$ which is of exponential type, and thus in particular for $G$ classical).  That is, every morphism of affine group schemes from $\mathbb{G}_{a(r)}$ to $G_{(r)}$ is given by the data of an $r$-tuple of pairwise commuting $p$-nilpotent elements in $\mathfrak{g}$.  If we write $\underline{\beta} = (\beta_0, \ldots, \beta_{r-1})$, then we follow \cite{SFB1} in writing the corresponding morphism as

\vspace{0.05 in}
\begin{center}$\text{exp}_{\underline{\beta}}: \mathbb{G}_{a(r)} \rightarrow G_{(r)}$,\end{center}
\vspace{0.05 in}

\noindent where, for all commutative $k$-algebras $A$, and all $s \in \mathbb{G}_{a(r)}(A)$,

\vspace{0.05 in}
\begin{center}$\text{exp}_{\underline{\beta}}(s) = \text{exp}_{\beta_0}(s) \cdot \text{exp}_{\beta_1}(s^p) \cdots \text{exp}_{\beta_{r-1}}(s^{p^{r-1}})$.\end{center}
\vspace{0.05 in}

\section{The action of $d\text{exp}_{\underline{\beta}}(u_{r-1})$}

For this section only, we will assume that $G$ is any subgroup of $GL_n$ which is of exponential type.  As just recalled in the previous section, for a $G_{(r)}$-module $M$, the support variety of $M$ over $G_{(r)}$ is given by the set of those $\underline{\beta} = (\beta_0,\ldots,\beta_{r-1}) \in C_r(\mathcal{N}_p(\mathfrak{g}))$ such that $M$ is not projective as a module over $k[u]/(u^p)$, with $u$ acting as $d\text{exp}_{\underline{\beta}}(u_{r-1}) \in Dist(G_{(r)})$.  In this section we prove that this is equivalent to checking whether or not $M$ is projective over $k[u]/(u^p)$ with $u$ acting as $d\text{exp}_{\beta_0}(u_{r-1}) + \cdots + d\text{exp}_{\beta_{r-1}}(u_0)$.

\bigskip
First we state a lemma which recalls two important results pertaining to the projectivity of $k[u]/(u^p)$-modules.  These results can be found in \cite{CLN} and \cite{SFB2}.

\begin{lemma}\label{proj}
Let $M$ be a finite dimensional $k$-vector space.

\begin{enumerate}
\item \cite[Proposition 8]{CLN} Let $x,y \in End_k(M)$ be commuting elements such that $x^p = y^{p-1} = 0$.  Then $M$ is projective as a $k[u]/(u^p)$-module where $u$ acts via $x$ if and only if it is projective when $u$ acts via $x + y$.
\item \cite[Lemma 6.4]{SFB2} Let $x_1,\ldots,x_m \in End_k(M)$ be pairwise commuting $p$-nilpotent elements, and let $f \in k[X_1,\ldots,X_m]$ be a polynomial without linear or constant terms.  Then $M$ is projective as a $k[u]/(u^p)$-module where $u$ acts via $x_1$ if and only if it is projective when $u$ acts via $x_1 + f(x_1,\ldots,x_m)$.
\end{enumerate}
\end{lemma}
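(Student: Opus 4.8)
The plan is to notice first that the lemma asks for nothing new: part (1) is exactly \cite[Proposition 8]{CLN} and part (2) is exactly \cite[Lemma 6.4]{SFB2}, so the ``proof'' is a citation, and the statement is recorded here only for convenient reference in Sections 2--4. For orientation I indicate the shape of each argument and where the real content sits.

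For part (1) the natural starting point is the elementary rank criterion for $k[u]/(u^p)$: a finite-dimensional module $M$ on which $u$ acts as a nilpotent operator $T$ is projective (equivalently free) if and only if every Jordan block of $T$ has size exactly $p$, equivalently if and only if $\operatorname{rank}_k(T^{p-1})$ attains its largest possible value, $(\dim_k M)/p$. With this in hand one compares $x$ and $x+y$. First note that $(x+y)^p = x^p + y^p = 0$ in characteristic $p$, using $x^p = 0$ and $y^p = y\cdot y^{p-1}=0$, so both operators are genuinely $p$-nilpotent and the criterion applies to each; note also that the substitution $(x,y)\mapsto(x+y,-y)$ preserves all the hypotheses and interchanges the two operators, so it suffices to prove a single implication. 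The natural device is then the finite filtration $M \supseteq yM \supseteq y^2M \supseteq \cdots \supseteq y^{p-1}M = 0$: since $x$ commutes with $y$ each term is stable under both $x$ and $x+y$, and since $y$ annihilates each successive quotient, on those quotients $x$ and $x+y$ induce the same operator and hence have identical projectivity behaviour. The genuinely subtle point --- the reason this is a theorem rather than a one-line d\'evissage --- is that a submodule or subquotient of a free $k[u]/(u^p)$-module need not be free (already $uM \subseteq M = k[u]/(u^p)$ fails), so freeness cannot simply be propagated up and down the filtration; reconciling this while using $x^p=0$ and $y^{p-1}=0$ simultaneously is precisely the work done in \cite[Proposition 8]{CLN}, which I would follow rather than reprove.

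For part (2) the plan is to reduce to part (1) by induction on the number of monomials occurring in $f$, peeling off one monomial $g = x_1^{a_1}\cdots x_m^{a_m}$ (with $\sum_i a_i \ge 2$) at a time and applying (1) with the new ``$y$'' equal to a scalar multiple of $g$. This succeeds immediately whenever some $a_i \ge 2$, since then $g^{p-1} = \prod_i x_i^{a_i(p-1)} = 0$ because $a_i(p-1)\ge p$ and $x_i^p = 0$, so the hypothesis $y^{p-1}=0$ of (1) holds. The only case this misses is that of a squarefree monomial of degree $\ge 2$, for which $g^{p-1}$ need not vanish (for instance $x_i^{p-1}x_j^{p-1}$ can be nonzero when $x_i$ and $x_j$ each act with a Jordan block of size $p$), so (1) does not apply verbatim. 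This is exactly where \cite[Lemma 6.4]{SFB2} does the extra work: one deforms $x_1 + g$ to its linear part $x_1$ through a $\mathbb{G}_m$-homogeneous family of $p$-nilpotent operators and uses that the locus on which $M$ fails to be $k[u]/(u^p)$-projective is Zariski closed, together with the finer structure of that locus (it coincides with a cohomological support variety), to exclude the asymmetric possibility that projectivity holds at one end of the family and not the other. I expect this squarefree case --- arranging the deformation so that it genuinely forces an ``all or nothing'' conclusion --- to be the main obstacle, everything else being the formal reduction above, and I would simply invoke \cite[Lemma 6.4]{SFB2} for it.
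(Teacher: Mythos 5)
The paper offers no proof of this lemma at all: it is a straight citation of \cite[Proposition 8]{CLN} for part (1) and \cite[Lemma 6.4]{SFB2} for part (2), exactly as you recognize. Your sketches of the two arguments (rank criterion and filtration for (1), monomial-by-monomial reduction to (1) in the non-squarefree case for (2)) are a sensible orientation and correctly locate where the cited results do real work, which is all that is intended; this matches the paper's approach.
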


\begin{remark}
It should be noted that the actual statement found in \cite[Proposition 8]{CLN} is given in terms of group rings, though it is equivalent to our formulation above.
\end{remark}

\bigskip
Returning now to $d\text{exp}_{\underline{\beta}}$, we note that $\text{exp}_{\underline{\beta}}$ in fact defines a morphism $\mathbb{G}_a \rightarrow G$, which of course restricts to the element in $V_{G_{(r)}}$ detailed earlier.  Evidently all such morphisms from $\mathbb{G}_{a(r)}$ to $G_{(r)}$ arise as restrictions of morphisms from $\mathbb{G}_a \rightarrow G$ (this is observed towards the end of \cite[Remark 1.3]{SFB1} for $GL_n$, and is therefore also true for $G$ a subgroup of exponential type).  It will be convenient to work within this context, viewing $d\text{exp}_{\underline{\beta}}$ as a map from $Dist(\mathbb{G}_a)$ to $Dist(G)$.  We will then write $d\text{exp}_{\underline{\beta}}$ as the composite of two algebra maps, and by keeping track of the image of $u_{r-1}$ under these maps, we will prove Proposition \ref{equal}.

To obtain this decomposition of $d\text{exp}_{\underline{\beta}}$, we see that $\text{exp}_{\underline{\beta}}: \mathbb{G}_a \rightarrow G$ is given by the composite of morphisms $\varphi_r \circ \psi_r$, where

$$\psi_r: \mathbb{G}_a \rightarrow \mathbb{G}_a^{\times r} \quad \text{ and } \quad \varphi_r: \mathbb{G}_a^{\times r} \rightarrow G,$$

\bigskip
\noindent and these morphisms are defined by

$$\psi_r(s) = (s,s^p,\ldots,s^{p^{r-1}}) \text{ and } \varphi_r((a_0, \ldots, a_{r-1})) = \text{exp}_{\beta_0}(a_0) \cdots \text{exp}_{\beta_{r-1}}(a_{r-1}),$$

\bigskip
\noindent for all $s,a_i \in \mathbb{G}_a(A)$, and all $A$.

\bigskip
That $\varphi_r$ is a morphism of affine group schemes (rather than just a morphism of affine schemes) follows from the fact that the $\beta_i$ commute, while $\psi_r$ is fairly clearly a morphism of affine group schemes.  We observe also that, with $F$ and $\delta$ as defined in the previous section,

\begin{equation}\label{recursive}
\psi_2 = (id. \times F) \circ \delta, \quad \text{ and for } i>2, \; \psi_{i} := (id. \times \psi_{i-1}) \circ \psi_2
\end{equation}

\bigskip
\begin{prop}\label{equal}
Let $\underline{\beta} = (\beta_0,\ldots,\beta_{r-1}) \in C_r(\mathcal{N}_p(\mathfrak{g}))$.  Then a $G_{(r)}$-module is projective over $k[u]/(u^p)$ with $u$ acting via $d\textup{exp}_{\underline{\beta}}(u_{r-1}) \in Dist(G_{(r)})$ if and only if it is projective when the action of $u$ is given by

\vspace{0.05 in}
\begin{center} $d\textup{exp}_{\beta_0}(u_{r-1}) + d\textup{exp}_{\beta_1}(u_{r-2}) + \cdots + d\textup{exp}_{\beta_{r-1}}(u_0).$\end{center}
\vspace{0.05 in}

\end{prop}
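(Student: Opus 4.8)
The plan is to compute the image of $u_{r-1} \in Dist(\mathbb{G}_a)$ under the decomposition $d\text{exp}_{\underline{\beta}} = d\varphi_r \circ d\psi_r$, and then invoke Lemma \ref{proj} repeatedly to strip away all but the ``diagonal'' terms. First I would record the two basic computations needed. On the $\psi_r$ side, using the recursive formula \eqref{recursive} together with the formulas $dF^i(u_j) = u_{j-i}$ (or $0$) and the fact that $d\delta = \Delta_{\mathbb{G}_a}'$ is the comultiplication sending $(\frac{d}{dt})^{(n)}$ to $\sum_{i+j=n}(\frac{d}{dt})^{(i)} \otimes (\frac{d}{dt})^{(j)}$, one computes $d\psi_r(u_{r-1}) \in Dist(\mathbb{G}_a)^{\otimes r} = Dist(\mathbb{G}_a^{\times r})$. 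Since $\Delta_{\mathbb{G}_a}'(u_{n})$ has the ``primitive-like'' part $u_n \otimes 1 + 1 \otimes u_n$ plus divided-power cross terms $\sum_{0<i<p^n}(\frac{d}{dt})^{(i)}\otimes(\frac{d}{dt})^{(p^n-i)}$, an induction shows that $d\psi_r(u_{r-1})$ equals
$$ u_{r-1}\otimes 1 \otimes \cdots \otimes 1 + 1 \otimes u_{r-2} \otimes 1 \otimes \cdots \otimes 1 + \cdots + 1 \otimes \cdots \otimes 1 \otimes u_0 + (\text{correction terms}),$$
where each correction term is a tensor product in which at least one tensor factor is a divided power $(\frac{d}{dt})^{(m)}$ with $0 < m$ not a power of $p$; crucially these are products (in $Dist(\mathbb{G}_a)$, after applying $d\varphi_r$) of the elements $d\text{exp}_{\beta_i}(u_j)$, i.e. polynomial expressions of degree $\ge 2$ in them, with no constant or linear term.

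Next I would analyze $d\varphi_r$. Because $\varphi_r((a_0,\ldots,a_{r-1})) = \text{exp}_{\beta_0}(a_0)\cdots \text{exp}_{\beta_{r-1}}(a_{r-1})$ factors as the product over the $r$ coordinate copies of $\mathbb{G}_a$, the algebra map $d\varphi_r : Dist(\mathbb{G}_a)^{\otimes r} \to Dist(G)$ is the ``multiply together'' map: on $x_0 \otimes \cdots \otimes x_{r-1}$ it returns $d\text{exp}_{\beta_0}(x_0)\cdots d\text{exp}_{\beta_{r-1}}(x_{r-1})$ (this uses that $\varphi_r$ is a group homomorphism, which in turn uses $[\beta_i,\beta_j]=0$). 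Applying this to the expansion of $d\psi_r(u_{r-1})$ gives
$$ d\text{exp}_{\underline{\beta}}(u_{r-1}) = d\text{exp}_{\beta_0}(u_{r-1}) + d\text{exp}_{\beta_1}(u_{r-2}) + \cdots + d\text{exp}_{\beta_{r-1}}(u_0) + R,$$
where $R$ is a sum of monomials, each a product of at least two elements drawn from the finite commuting set $\{ d\text{exp}_{\beta_i}(u_j) \}$. Here I would note that all of these elements pairwise commute and are $p$-nilpotent: commutativity holds because the $\beta_i$ commute (so the one-parameter subgroups commute and hence their distribution algebras do), and $p$-nilpotence of $d\text{exp}_{\beta_i}(u_j)$ follows since $d\text{exp}_{\beta_i}$ sends $Dist(\mathbb{G}_{a(j+1)})$-type elements into a $p$-nilpotent situation — more concretely each such element acts on any $G_{(r)}$-module through a $p$-nilpotent operator, which is what Lemma \ref{proj}(2) requires.

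With the identity $d\text{exp}_{\underline{\beta}}(u_{r-1}) = \big(\sum_{i} d\text{exp}_{\beta_i}(u_{r-1-i})\big) + R$ in hand, the conclusion follows by one application of Lemma \ref{proj}(2): take $x_1 = \sum_i d\text{exp}_{\beta_i}(u_{r-1-i})$ — wait, this needs a moment of care, since Lemma \ref{proj}(2) is phrased for $x_1$ a single generator while here it is a sum. The clean way is to list the $x_1,\ldots,x_m$ as the full set of elements $d\text{exp}_{\beta_i}(u_j)$ appearing (there are finitely many), note $R = f(x_1,\ldots,x_m)$ for a polynomial $f$ with no constant or linear term, and observe that $\sum_i d\text{exp}_{\beta_i}(u_{r-1-i})$ is itself a linear (hence admissible) combination; applying part (2) with $x_1$ replaced by this linear combination (the proof of \cite[Lemma 6.4]{SFB2} works verbatim for any element in the span, or one can reduce to it) shows projectivity with $u$ acting via $x_1$ is equivalent to projectivity with $u$ acting via $x_1 + f(x_1,\ldots,x_m) = d\text{exp}_{\underline{\beta}}(u_{r-1})$. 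The main obstacle is the bookkeeping in the induction of the first step: one must verify carefully that in $d\psi_r(u_{r-1})$ every ``non-diagonal'' tensor monomial maps under $d\varphi_r$ to a product of $\ge 2$ of the elements $d\text{exp}_{\beta_i}(u_j)$ — in particular that no cross term contributes an extra linear term in these generators that would collide with the diagonal sum — and this is exactly where the divided-power structure of $\Delta'_{\mathbb{G}_a}$ and the formula for $dF^i$ must be combined correctly.
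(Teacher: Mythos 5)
Your proposal follows the paper's route exactly in outline: factor $\text{exp}_{\underline{\beta}}$ as $\varphi_r \circ \psi_r$, use the recursion $\psi_r = (\mathrm{id} \times \psi_{r-1}) \circ \psi_2$ together with the formulas for $\Delta'_{\mathbb{G}_a}$ and $dF$ to expand $d\psi_r(u_{r-1})$, identify the "diagonal" part, and dispose of the remainder with Lemma~\ref{proj}(2). But the inductive invariant you propose for the correction terms --- ``some tensor factor is $(\tfrac{d}{dt})^{(m)}$ with $0<m$ not a power of $p$'' --- is not the property that the recursion preserves, and this is where your sketch has a genuine gap. In $d\psi_r = (\mathrm{id}\otimes d\psi_{r-1})\circ d\psi_2$, a $\psi_2$-cross term $(\tfrac{d}{dt})^{(i)}\otimes(\tfrac{d}{dt})^{(j')}$ can have $i$ a power of $p$ (note $i$ is always divisible by $p$, so $i=p^{a+1}$ is not ruled out), in which case you must control $d\psi_{r-1}\bigl((\tfrac{d}{dt})^{(j')}\bigr)$. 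That element is a product such as $d\psi_{r-1}(u_a)^{p-1}\cdots d\psi_{r-1}(u_{r-3})^{p-1}$, and nothing about your stated invariant prevents its expansion from containing tensor monomials all of whose nontrivial factors are single $u_l$'s (i.e.\ divided powers indexed by powers of $p$). So the induction, as you have framed it, does not visibly close.

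The invariant that does work --- and that the paper uses --- is strictly coarser: each correction term is a \emph{product of two or more $p$-nilpotent elements} of $Dist(\mathbb{G}_a)^{\otimes r}$. This is automatically preserved by any algebra homomorphism, so it passes through both $\mathrm{id}\otimes d\psi_{r-1}$ and $d\varphi_r$ with no bookkeeping, and it is precisely what is needed to see that $d\varphi_r(y)$ is a polynomial with no constant or linear term in the pairwise-commuting $p$-nilpotent elements $d\text{exp}_{\beta_i}(u_j)$. Replace your characterization with this one and the induction is clean. Separately, your hedging about Lemma~\ref{proj}(2) being stated for $x_1$ a single ``generator'' is unnecessary: in characteristic $p$ a sum of pairwise-commuting $p$-nilpotent operators is itself $p$-nilpotent, so the diagonal sum qualifies as $x_1$ in the lemma's list $\{x_1,\ldots,x_m\}$ (with the various $d\text{exp}_{\beta_i}(u_j)$ as $x_2,\ldots,x_m$); the paper applies the lemma exactly this way, with no modification of its proof.
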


\begin{proof}
As the morphism $\text{exp}_{\underline{\beta}} = \varphi_r \circ \psi_r$, we have by basic properties (\cite[I.7]{J1}) that $d\text{exp}_{\underline{\beta}} = d\varphi_r \circ d\psi_r$.  From the definition of $\varphi_r$, it follows that

$$d\varphi_r: Dist(\mathbb{G}_a)^{\otimes r} \rightarrow Dist(G)$$

\bigskip
\noindent sends the simple tensor $x_0 \otimes \cdots \otimes x_{r-1}$ to $d\text{exp}_{\beta_0}(x_0)d\text{exp}_{\beta_1}(x_1)\cdots d\text{exp}_{\beta_{r-1}}(x_{r-1})$.  This is true because $\varphi_r$ is $\text{exp}_{\beta_0} \times \cdots \times \text{exp}_{\beta_{r-1}}: \mathbb{G}_a^{\times r} \rightarrow G^{\times r}$, followed by repeated multiplication in $G$, so 

\vspace{0.05 in}
\begin{center} $d\varphi_r = d\text{exp}_{\beta_0} \overline{\otimes} d\text{exp}_{\beta_1} \overline{\otimes} \cdots \overline{\otimes} d\text{exp}_{\beta_{r-1}}.$ \end{center}
\vspace{0.05 in}

To prove the proposition, we will thus show that

\begin{equation}\label{claim}
d\psi_r(u_{r-1}) = u_{r-1} \otimes 1 \otimes \cdots \otimes 1 + 1 \otimes u_{r-2} \otimes 1 \otimes \cdots \otimes 1 + \cdots + 1 \otimes \cdots \otimes 1 \otimes u_0 + y,
\end{equation}

\noindent where $y = y_1 + \cdots + y_m$, and each $y_i$ is the product of two or more $p$-nilpotent elements in $Dist(\mathbb{G}_a)^{\otimes r}$.  Once proved, the result will then follow since this will give us that 

$$d\text{exp}_{\underline{\beta}}(u_{r-1}) = d\text{exp}_{\beta_0}(u_{r-1}) + \cdots + d\text{exp}_{\beta_{r-1}}(u_0) + d\varphi_r(y)$$

\noindent and, according to the second statement in Lemma \ref{proj}, the last term on the right side of the equation will not factor into the detection of projectivity.

Now, to prove claim (\ref{claim}), we see that this holds trivially if $r = 1$, since $\psi_1 = id.$, and hence $d\psi_1 = id.$ as well.  By (\ref{recursive}), and recalling that $d\delta = \Delta_{\mathbb{G}_a}^{\prime}$, we have when $r=2$ that

$$d\psi_2(u_1) = (id. \otimes dF) \circ \Delta_{\mathbb{G}_a}^{\prime}(u_1) = \sum_{i+j = p} \frac{d}{dt}^{(i)} \otimes dF\left(\frac{d}{dt}^{(j)}\right)$$

As $dF(\frac{d}{dt}^{(j)})$ is non-zero only when $j=0$ or $j=p$, we get 

\vspace{0.05 in}
\begin{center}$d\psi_2(u_1) = u_1 \otimes 1 + 1 \otimes u_0$,\end{center}
\vspace{0.05 in}

\noindent thus the claim is also true for $r=2$.  More generally, we have by the second equality in (\ref{recursive}) that $d\psi_r = (id. \otimes d\psi_{r-1}) \circ d\psi_2$.  We first observe then that

$$d\psi_2(u_{r-1}) = (id. \otimes dF) \circ \Delta_{\mathbb{G}_a}^{\prime}(u_{r-1}) = \sum_{i+j = p^{r-1}} \frac{d}{dt}^{(i)} \otimes dF\left(\frac{d}{dt}^{(j)}\right)$$

\bigskip
In $Dist(\mathbb{G}_a) \otimes Dist(\mathbb{G}_a)$, the only values of $i$ for which $\frac{d}{dt}^{(i)} \otimes dF\left(\frac{d}{dt}^{(j)}\right)$ is not the product of two $p$-nilpotent elements are $i = 0$ and $i = p^{r-1}$.  Hence

$$d\psi_2(u_{r-1}) = u_{r-1} \otimes 1 + 1 \otimes u_{r-2} + \sum x_i$$

\bigskip \noindent where each $x_i$ is the product of two $p$-nilpotent elements in $Dist(\mathbb{G}_a)^{\otimes 2}$.  Therefore

$$d\psi_r(u_{r-1}) = (id. \otimes d\psi_{r-1})(u_{r-1} \otimes 1 + 1 \otimes u_{r-2} + \sum x_i)$$

We see then that 

\vspace{0.05 in}
\begin{center}$(id. \otimes d\psi_{r-1})(u_{r-1} \otimes 1) = u_{r-1} \otimes 1 \otimes \cdots \otimes 1$,\end{center}
\vspace{0.05 in}

\noindent while $(id. \otimes d\psi_{r-1})(\sum x_i) = \sum (id. \otimes d\psi_{r-1})(x_i)$ is a sum of elements which are each the product of two $p$-nilpotent elements in $Dist(\mathbb{G}_a)^{\otimes r}$.  Finally, by induction it follows that

$$(id. \otimes d\psi_{r-1})(1 \otimes u_{r-2}) = 1 \otimes \big(u_{r-2} \otimes 1 \otimes \cdots \otimes 1 + \cdots + 1 \otimes \cdots \otimes 1 \otimes u_0  + y^{\prime} \big)$$

\bigskip
\noindent with $y^{\prime} = y^{\prime}_1 + \cdots + y^{\prime}_{m^{\prime}}$, and each $y^{\prime}_i$ is the product of two or more $p$-nilpotent elements in $Dist(\mathbb{G}_a)^{\otimes r-1}$.  This finishes the proof.

\end{proof}

\section{Results for simple modules}

We return now to the case that $G$ is a classical simple group, and look at describing the support varieties of simple $G$-modules over $G_{(r)}$ in terms of support varieties over $G_{(1)}$.  We have chosen to first state this result, in Proposition \ref{first}, for $L(\lambda)$ with $\lambda \in X_1(T)$, and then proceed to the general case in Theorem \ref{simple}, the proof of which will follow from the proof of Proposition \ref{first} together with Steinberg's tensor product theorem on simple $G$-modules.

Our proof relies on a result of  E. Friedlander in \cite{F}, which itself is based on the work of Carlson, Lin, and Nakano in \cite[4.6]{CLN}.  For these methods to be applied then, we need to assume that $p$ is large enough for the root system of the given group.  Specifically, we will assume that $p>hc$, where $h$ is the Coxeter number of $G$, and where $c$ is an integer for the root system of $G$, the value of which is computed in \cite[6.1]{CLN}, and recalled below:

\bigskip
\begin{center}$\begin{array}{lccccc}
\text{Type} & : & A_{\ell} & B_{\ell} & C_{\ell} & D_{\ell}\\
\text{Value of } c & : & (\frac{\ell + 1}{2})^2 & \frac{\ell(\ell + 1)}{2} & \frac{\ell^2}{2} & \frac{\ell(\ell - 1)}{2}\\
\end{array}$ \end{center}

\bigskip
\begin{prop}\label{first}
Assume that $p > hc$.  If $L(\lambda)$ is a simple $G$-module such that $\lambda \in X_1(T)$, then

$$V_{G_{(r)}}(L(\lambda)) = \{ (\beta_0, \ldots, \beta_{r-1}) \in C_r(\mathcal{N}_p(\mathfrak{g}))\mid \beta_{r-1} \in V_{G_{(1)}}(L(\lambda))\}$$
\end{prop}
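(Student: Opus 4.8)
The plan is to combine Proposition \ref{equal} with Lemma \ref{proj}(1), the essential input being a bound on how the ``higher'' generators of $Dist(\mathbb G_a)$ can act on a restricted simple module. Fix $\underline\beta = (\beta_0,\dots,\beta_{r-1}) \in C_r(\mathcal N_p(\mathfrak g))$. Since $d\text{exp}_{\beta_{r-1}}(u_0) = \beta_{r-1}$, Proposition \ref{equal} says that $\underline\beta \in V_{G_{(r)}}(L(\lambda))$ if and only if $L(\lambda)$ is non-projective over $k[u]/(u^p)$ when $u$ acts via $X = Y + \beta_{r-1}$, where $Y = \sum_{i=0}^{r-2} d\text{exp}_{\beta_i}(u_{r-1-i})$. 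Because the $\beta_i$ commute, the $\text{exp}_{\beta_i}$ commute, so $Y$ and $\beta_{r-1}$ commute and each acts $p$-nilpotently on $L(\lambda)$.

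The heart of the proof is the claim that $Y$ acts as a $(p-1)$-nilpotent operator on $L(\lambda)$. Granting this, $x = \beta_{r-1}$ and $y = Y$ satisfy the hypotheses of Lemma \ref{proj}(1) on $L(\lambda)$ (they commute, $x^p = 0$ because $\beta_{r-1}^{[p]} = 0$, and $y^{p-1} = 0$), so $L(\lambda)$ is projective over $k[u]/(u^p)$ with $u = X$ if and only if it is projective with $u = \beta_{r-1}$, i.e. if and only if $\beta_{r-1} \notin V_{G_{(1)}}(L(\lambda))$; together with the previous paragraph this is exactly the asserted equality of sets. To prove the claim I would argue one summand of $Y$ at a time, establishing that for every $\beta \in \mathcal N_p(\mathfrak g)$ the operator $d\text{exp}_\beta(u_j)$ acts as zero on $L(\lambda)$ when $j \ge 2$ and $(p-1)$-nilpotently when $j = 1$. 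Then on $L(\lambda)$ every summand of $Y$ except the $i = r-2$ one vanishes, and $Y$ reduces to the single $(p-1)$-nilpotent operator $d\text{exp}_{\beta_{r-2}}(u_1)$.

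This termwise statement is where the hypothesis $p > hc$ and the analysis of \cite{F} and \cite[4.6]{CLN} are used. Conjugating by $g \in G$ replaces $d\text{exp}_\beta(u_j)$ by $\text{Ad}(g)\,d\text{exp}_\beta(u_j)$, which does not change nilpotency degrees on $L(\lambda)$, so one may assume $\beta$ lies in the nilradical $\mathfrak u = \text{Lie}(U)$. A direct inspection of the matrix exponential shows that $\beta \mapsto d\text{exp}_\beta(u_j)$ is homogeneous of degree $p^j$ in $\beta$; combined with $T$-equivariance this forces every nonzero $T$-weight component of $d\text{exp}_\beta(u_j)$, for $\beta \in \mathfrak u$, to have weight $\nu$ in the positive root lattice with $\text{ht}(\nu) \ge p^j$. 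Hence $(d\text{exp}_\beta(u_j))^m$ raises the $T$-weights of $L(\lambda)$ by height at least $mp^j$, so it annihilates $L(\lambda)$ once $mp^j > \text{ht}(\lambda - w_0\lambda)$. Finally, since $\lambda$ is $p$-restricted one has the root-system estimate $\text{ht}(\lambda - w_0\lambda) \le 2(p-1)\,\text{ht}(\rho)$, and $p > hc$ is precisely what makes $2\,\text{ht}(\rho) < p$; this yields $p^2 > \text{ht}(\lambda - w_0\lambda)$, so $d\text{exp}_\beta(u_j)$ kills $L(\lambda)$ for $j \ge 2$, and $(p-1)p > \text{ht}(\lambda - w_0\lambda)$, so $(d\text{exp}_\beta(u_1))^{p-1}$ kills $L(\lambda)$. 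I expect this last step to be the main obstacle: controlling $d\text{exp}_\beta(u_j)$ for a general, un-straightened $p$-nilpotent $\beta$ and making the needed weight estimate uniform over all $p$-restricted $\lambda$ is exactly the technical work carried out in the cited papers of Friedlander and of Carlson--Lin--Nakano, and where the explicit bound $p > hc$ is consumed.
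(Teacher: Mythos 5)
Your proposal is correct and follows essentially the same route as the paper: reduce via Proposition~\ref{equal} to the operator $\sum_i d\text{exp}_{\beta_i}(u_{r-1-i})$, establish that the summands with $u_m$, $m\ge 1$, have small nilpotency degree on $L(\lambda)$ via a height/weight-raising argument (the role of $p>hc$), and then use Lemma~\ref{proj}(1) to discard them. There are two packaging differences. First, where the paper invokes \cite[Prop.~2.7]{F} as a black box to get $d\text{exp}_{\beta_i}(u_m)^{p-1}=0$ on $L(\lambda)$ once $p^m(p-1) > 2\sum_j\langle\lambda,\omega_j^\vee\rangle$, you reconstruct that bound directly from the homogeneity of $\beta\mapsto d\text{exp}_\beta(u_j)$ of degree $p^j$ together with $T$-equivariance (this is in substance the same mechanism carried out in \cite{F} and \cite[4.6]{CLN}); note $2\sum_j\langle\lambda,\omega_j^\vee\rangle = \mathrm{ht}(\lambda - w_0\lambda)$, so your estimate and the paper's agree, and there is only a sign slip in saying the operator ``raises'' weights when $\beta\in\mathrm{Lie}(U)$. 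Second, the paper applies Lemma~\ref{proj}(1) iteratively, removing one $(p-1)$-nilpotent summand at a time (each partial sum is $p$-nilpotent by the freshman's dream, so the hypotheses persist), whereas you prove the strictly stronger claim that the $j\ge 2$ summands vanish identically on $L(\lambda)$, which lets the whole auxiliary operator $Y$ be $(p-1)$-nilpotent and reduces to a single application of the lemma; your stronger vanishing is true under $p>hc$, but more than is needed.
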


\begin{proof}
By Theorem \ref{alternate} and Propostion \ref{equal}, we have that $\underline{\beta} = (\beta_0, \ldots, \beta_{r-1}) \in V_{G_{(r)}}(L(\lambda))$ if and only if $L(\lambda)$ is non-projective as a $k[u]/(u^p)$-module with $u$ acting via $d\text{exp}_{\beta_0}(u_{r-1}) + \cdots + d\text{exp}_{\beta_{r-1}}(u_0)$.  To prove this proposition, we will show that, in terms of detection of projectivity, we can ignore the actions of the terms $d\text{exp}_{\beta_{0}}(u_{r-1}) , d\text{exp}_{\beta_{1}}(u_{r-2}), \ldots, d\text{exp}_{\beta_{r-2}}(u_1)$.

For $0 \le i \le r-1$, $\beta_i$ is a $p$-nilpotent element in $\mathfrak{g} \subseteq \mathfrak{gl}_n$, since we are viewing $G$ with its natural embedding in $GL_n$.  As this embedding is of exponential type, $x := exp(\beta_i)$ is a $p$-unipotent element in $G(k) \subseteq GL_n(k)$.  It follows from the discussion in Example 1.12 of \cite{F} that the map $\phi_x : \mathbb{G}_a \rightarrow G$ in Theorem 1.7 of \cite{F} is the same as the map $\text{exp}_{\beta_i}: \mathbb{G}_a \rightarrow G$.  Indeed, it is noted in the example that $\phi_x(t) = exp(t \cdot log(x))$, where $log(x) = \sum_{0<j<p} (-1)^{j+1}(x-1)^j/j$.  One can check then that $log(exp(\beta_i)) = \beta_i$.

We can therefore apply Proposition 2.7 of \cite {F} to $d\text{exp}_{\beta_i}$, which shows that

$$d\text{exp}_{\beta_i}(u_m)^{p-1} = d\text{exp}_{\beta_i}(u_m^{p-1}) = d\text{exp}_{\beta_i}\left((p-1)!\frac{d}{dt}^{(p^m(p-1))}\right)$$

\noindent acts trivially on $L(\lambda)$ if 

$$p^m(p-1) > 2 \sum_{j=1}^{\ell} \langle \lambda, \omega_j^{\vee} \rangle.$$

Following \cite{CLN}, let $(b_{ij})$ denote the inverse matrix of $(\langle \alpha_i, \alpha_j^{\vee} \rangle)$.  From Section 4.6 of \cite{CLN}, we see that 

$$\sum_{j=1}^{\ell} \langle \lambda, \omega_j^{\vee} \rangle = \sum_{i=1}^{\ell} \sum_{j=1}^{\ell} \langle \lambda, \alpha_i^{\vee} \rangle b_{ij}.$$

Tracing through the calculations in \cite[6.1]{CLN}, we further have that $\langle \lambda, \alpha_0^{\vee} \rangle c \ge 2 \sum_{i=1}^{\ell} \sum_{j=1}^{\ell} \langle \lambda, \alpha_i^{\vee} \rangle b_{ij}$, which by the earlier equality says that

$$\langle \lambda, \alpha_0^{\vee} \rangle c \ge 2 \sum_{j=1}^{\ell} \langle \lambda, \omega_j^{\vee} \rangle,$$

\noindent where $c$ is given as above.  Therefore, under the assumption that $p > hc$ and $\lambda \in X_1(T)$, we see that for all $m \ge 1$:

$$p^m(p-1) > hc(p-1) > \langle \rho(p-1), \alpha_0^{\vee} \rangle c \ge \langle \lambda, \alpha_0^{\vee} \rangle c \ge 2 \sum_{j=1}^{\ell} \langle \lambda, \omega_j^{\vee} \rangle.$$

By repeatedly applying the first statement of Lemma \ref{proj}, this then says that $\underline{\beta} \in V_{G_{(r)}}(L(\lambda))$ if and only if $L(\lambda)$ is not projective as a $k[u]/(u^p)$-module with $u$ acting via $d\text{exp}_{\beta_{r-1}}(u_0)$.  Finally, it is not hard to show that $d\text{exp}_{\beta_{r-1}}(u_0) = \beta_{r-1} \in \mathfrak{g}$, which completes the proof.
\end{proof}

\bigskip

\begin{thm}\label{simple}
Assume that $p > hc$.  Let $L(\lambda)$ be a simple $G$-module, with $\lambda$ written $\lambda = \lambda_0 + p\lambda_1 + \cdots + p^{q}\lambda_{q}, \, \lambda_i \in X_1(T)$.  Then

\vspace{0.05 in}
\begin{center}$V_{G_{(r)}}(L(\lambda)) = \{ (\beta_0, \ldots, \beta_{r-1}) \in C_r(\mathcal{N}_p((\mathfrak{g})) \mid \beta_i \in V_{G_{(1)}}(L(\lambda_{r-i-1}))\}$\end{center}
\vspace{0.05 in}

\end{thm}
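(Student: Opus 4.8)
The plan is to reduce Theorem \ref{simple} to Proposition \ref{first} by combining Steinberg's tensor product theorem with the behavior of support varieties under tensor products and Frobenius twists. First I would write $L(\lambda) \cong L(\lambda_0) \otimes L(\lambda_1)^{(1)} \otimes \cdots \otimes L(\lambda_q)^{(q)}$ by Steinberg. The goal is to compute, for $\underline{\beta} = (\beta_0,\ldots,\beta_{r-1}) \in C_r(\mathcal{N}_p(\mathfrak{g}))$, whether $L(\lambda)$ is projective over $k[u]/(u^p)$ with $u$ acting as $d\mathrm{exp}_{\underline{\beta}}(u_{r-1})$. By Proposition \ref{equal} this action may be replaced by $d\mathrm{exp}_{\beta_0}(u_{r-1}) + \cdots + d\mathrm{exp}_{\beta_{r-1}}(u_0)$, and by the argument in the proof of Proposition \ref{first} (applying Friedlander's Proposition 2.7 of \cite{F} together with Lemma \ref{proj}(1)) the terms whose exponent on $u$ is too large act trivially and can be discarded. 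The key difference from the restricted case is that for the twisted factor $L(\lambda_j)^{(j)}$, the operator $d\mathrm{exp}_{\beta_i}(u_m)$ acts on $L(\lambda_j)^{(j)}$ as $d\mathrm{exp}_{\beta_i}(dF^j(u_m)) = d\mathrm{exp}_{\beta_i}(u_{m-j})$ when $m \ge j$ and as $0$ when $m < j$, because the representation is pulled back along $F^j$ and $dF^j(u_m) = u_{m-j}$.

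Next I would analyze which summands $d\mathrm{exp}_{\beta_i}(u_{r-1-i})$ act nontrivially on each tensor factor and with what ``effective'' nilpotency. On $L(\lambda_j)^{(j)}$, the summand $d\mathrm{exp}_{\beta_i}(u_{r-1-i})$ acts as $d\mathrm{exp}_{\beta_i}(u_{r-1-i-j})$, which is nonzero on this factor only if $r-1-i-j \ge 0$, i.e. $i \le r-1-j$; and when it is nonzero, Friedlander's bound says its $(p-1)$-st power annihilates $L(\lambda_j)$ as soon as $p^{r-1-i-j}(p-1) > 2\sum_j \langle \lambda_j, \omega_j^\vee\rangle$, which by the same Coxeter-number estimate as in Proposition \ref{first} holds whenever $r-1-i-j \ge 1$. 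So on the factor $L(\lambda_j)^{(j)}$ only the single summand with $i = r-1-j$ acts with full nilpotency $p-1$ (acting there as $d\mathrm{exp}_{\beta_{r-1-j}}(u_0) = \beta_{r-1-j}$), while every other nonzero summand acts nilpotently of order dividing $p-1$. Then I would invoke the tensor product behavior of $k[u]/(u^p)$-modules: with $u$ acting diagonally on $\bigotimes_j L(\lambda_j)^{(j)}$ via $\sum_i d\mathrm{exp}_{\beta_i}(u_{r-1-i})$, after discarding (via Lemma \ref{proj}(1), repeatedly, exactly as in the proof of Proposition \ref{first}) every summand that acts with nilpotency order $\le p-1$ on all the factors it touches, one is left with $u$ acting on the $j$-th factor as $\beta_{r-1-j}$ for each $j$; equivalently $L(\lambda)$ is projective over $k[u]/(u^p)$ under $d\mathrm{exp}_{\underline{\beta}}(u_{r-1})$ iff $\bigotimes_j L(\lambda_j)^{(j)}$ is projective with $u$ acting as $\bigoplus_j \beta_{r-1-j}$ (interpreted as acting on the $j$-th factor).

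From here the conclusion follows: a tensor product of $k[u]/(u^p)$-modules is projective iff at least one tensor factor is projective (this is the standard fact that the trivial module is not a summand of a tensor product of non-projectives over $k[u]/(u^p)$, together with projectivity of $P \otimes N$). Since $L(\lambda_j)^{(j)}$ with $u$ acting as $\beta_{r-1-j}$ on the underlying space is projective iff $\beta_{r-1-j} \in V_{G_{(1)}}(L(\lambda_j))$ — here one uses that the Frobenius twist does not change the underlying vector space or the fact that $\beta_{r-1-j}$ is a $p$-nilpotent element of $\mathfrak g$ acting, and $V_{G_{(1)}}(L(\lambda_j))$ is by definition the set of $\beta \in \mathcal N_p(\mathfrak g)$ on which $L(\lambda_j)$ is non-projective as a $k[u]/(u^p)$-module with $u = \beta$ — we get that $L(\lambda)$ is non-projective under $d\mathrm{exp}_{\underline{\beta}}(u_{r-1})$ iff $\beta_{r-1-j} \in V_{G_{(1)}}(L(\lambda_j))$ for every $j$. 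Reindexing $i = r-1-j$ gives exactly the stated description $\{(\beta_0,\ldots,\beta_{r-1}) \mid \beta_i \in V_{G_{(1)}}(L(\lambda_{r-i-1}))\}$, with the convention that $L(\lambda_m) = k$ (so that $V_{G_{(1)}}(L(\lambda_m)) = \mathcal N_p(\mathfrak g)$, imposing no condition) when $m > q$. The main obstacle I anticipate is the bookkeeping in the second paragraph: carefully justifying that each extraneous summand can be peeled off using Lemma \ref{proj}(1) in the presence of the diagonal tensor action, since that lemma is about a single vector space and one must check the commuting and nilpotency hypotheses hold for the relevant pairs of operators on $\bigotimes_j L(\lambda_j)^{(j)}$; this is where the estimate $p > hc$ and the fact that distinct $\beta_i$'s need not commute with each other's actions on different twisted factors have to be handled with care.
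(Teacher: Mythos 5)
Your overall strategy follows the same outline as the paper's proof (Steinberg factorization, then reduce each twisted factor to the argument of Proposition \ref{first}), but you choose to work directly with the $k[u]/(u^p)$-module structure on $\bigotimes_j L(\lambda_j)^{(j)}$ and invoke a ``tensor product is projective iff a factor is'' criterion, whereas the paper sidesteps all such bookkeeping by citing the established identity $V_{G_{(r)}}(M\otimes N)=V_{G_{(r)}}(M)\cap V_{G_{(r)}}(N)$ from \cite[7.2]{SFB2} and then computes $V_{G_{(r)}}(L(\lambda_i)^{(i)})$ factor by factor. Your route is workable in principle but costs you the very delicate peeling step you flag: a summand $d\mathrm{exp}_{\beta_i}(u_{r-1-i})$ acts on the \emph{tensor product} via the comultiplication of $Dist(G)$, not simply as a diagonal sum of operators on the factors, so Lemma \ref{proj}(1) cannot be applied ``factor by factor'' without first showing that the non-primitive cross terms can be discarded; the paper's use of the intersection formula makes this a non-issue.

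There is also a concrete error. You compute that $d\mathrm{exp}_{\beta_i}(u_m)$ acts on $L(\lambda_j)^{(j)}$ as $d\mathrm{exp}_{\beta_i}(dF^j(u_m)) = d\mathrm{exp}_{\beta_i}(u_{m-j})$, i.e.\ you pass $dF^j$ through $d\mathrm{exp}_{\beta_i}$ as if they commuted. They do not: since $F^j\circ\mathrm{exp}_{\beta_i}=\mathrm{exp}_{F^j(\beta_i)}\circ F^j$, the correct identity is $dF^j\bigl(d\mathrm{exp}_{\beta_i}(u_m)\bigr)=d\mathrm{exp}_{F^j(\beta_i)}(u_{m-j})$. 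Consequently the dominant operator on the $j$-th factor is $F^j(\beta_{r-1-j})$, not $\beta_{r-1-j}$, and your argument as written only gives the criterion $F^j(\beta_{r-1-j})\in V_{G_{(1)}}(L(\lambda_j))$. You are missing the final step of the paper, which uses that $V_{G_{(1)}}(L(\lambda_j))$ is stable under the adjoint $G$-action and that nilpotent orbits in $\mathfrak{g}$ are stable under $F$ (Shu, \cite{S}) to replace $F^j(\beta_{r-1-j})$ by $\beta_{r-1-j}$. Without this $F$-stability argument your conclusion does not match the statement of the theorem, and the appeal to ``the Frobenius twist does not change the underlying vector space'' is not a valid substitute, since the twist very much changes how $Dist(G)$ acts on that space.
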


\bigskip
\begin{proof}
By Steinberg's tensor product theorem,

$$L(\lambda) \cong L(\lambda_0) \otimes L(\lambda_1)^{(1)} \otimes \cdots \otimes L(\lambda_{q})^{(q)}.$$

\bigskip
Now, for $G_{(r)}$-modules $M$,$N$, we have $V_{G_{(r)}}(M \otimes N) = V_{G_{(r)}}(M) \cap V_{G_{(r)}}(N)$ (\cite[7.2]{SFB2}).  Additionally, $M^{(i)}$ is trivial over $G_{(r)}$ if $i \ge r$, so that the statement of the theorem reduces to computing $V_{G_{(r)}}(L(\lambda_i)^{(i)})$, for $i < r$.

We note that the standard Frobenius morphism $F$ acting on $GL_n$ can also be applied to $\mathfrak{gl}_n$ (raising each entry to the $p$-th power).  Because $G$ is a classical subgroup of $GL_n$, $F(\mathfrak{g}) = \mathfrak{g}$.  We have then that $F^i \circ \text{exp}_{\underline{\beta}} = \text{exp}_{F^i(\underline{\beta})} \circ F^i$, where $F^i(\underline{\beta}) = (F^i(\beta_0),F^i(\beta_1),\ldots,F^{i}(\beta_{r-1}))$.

It follows that $L(\lambda_i)^{(i)}$, viewed as a $k[u]/(u^p)$-module with the action of $u$ given by $d\text{exp}_{\beta_0}(u_{r-1}) + \cdots + d\text{exp}_{\beta_{r-1}}(u_0)$, is isomorphic as a $k[u]/(u^p)$-module to the $G$-module $L(\lambda_i)$, with $u$ acting via 

\begin{eqnarray*}
& & d\text{exp}_{F^i(\beta_0)}(dF^i(u_{r-1})) + \cdots + d\text{exp}_{F^i(\beta_{r-1})}(dF^i(u_0)) \\
& = & d\text{exp}_{F^i(\beta_0)}(u_{r-i-1}) + \cdots + d\text{exp}_{F^i(\beta_{r-i-1})}(u_0).
\end{eqnarray*}

\vspace{0.02 in}
However, as shown in the proof of the previous proposition, the assumptions on $p$ and $\lambda_i$ ensure that if $u$ acts as $d\text{exp}_{F^i(\beta_0)}(u_{r-i-1}) + \cdots + d\text{exp}_{F^i(\beta_{r-i-1})}(u_0)$ on $L(\lambda_i)$, then the projectivity of this module over $k[u]/(u^p)$ is determined only by the action of $d\text{exp}_{F^i(\beta_{r-i-1})}(u_0)$.  Thus, $\underline{\beta} \in V_{G_{(r)}}(L(\lambda_i)^{(i)})$ if and only if $F^i(\beta_{r-i-1}) \in V_{G_{(1)}}(L(\lambda_i))$.  Since $V_{G_{(1)}}(L(\lambda_i))$ is stable under the adjoint action of $G$, and because nilpotent orbits in $\mathfrak{g}$ are stable under $F$ (see \cite[\S 3]{S}), it follows that $\underline{\beta} \in V_{G_{(r)}}(L(\lambda_i)^{(i)})$ if and only if $\beta_{r-i-1} \in V_{G_{(1)}}(L(\lambda_i))$.  Taking the intersection over all $0 \le i \le r-1$ finishes the proof.

\end{proof}

\begin{remark}
It is a basic property of support varieties that for any finite dimensional $G_{(r)}$-module $M$, $\text{dim} \, V_{G_{(r)}}(M) = c_{G_{(r)}}(M)$, where $ c_{G_{(r)}}(M)$ is the complexity of $M$.  In work which predates \cite{SFB1} and \cite{SFB2}, D. Nakano was able to prove \cite[2.4]{N} that 

$$ c_{G_{(r)}}(L(\lambda)) \le \sum_{i=0}^{r-1} c_{G_{(1)}}(L(\lambda_i)),$$

\noindent a bound which is now also verified by Theorem \ref{simple}.  It is worth noting however that Nakano's proof is independent of the prime $p$, and applies to any $G$ which is connected and semisimple.  There is thus some hope that our results will hold in much more generality. 
\end{remark}

\section{Support Varieties of Blocks}

In this section we will assume further that $G$ is simply-connected, thus $G$ is either a special linear group, or a symplectic group.  The finite dimensional algebra $Dist(G_{(r)})$ has a decomposition into blocks, and a $G_{(r)}$-module $M$ is said to lie in a block $\mathcal{B}$ if the central idempotent of $\mathcal{B}$ acts as the identity on $M$.  By the support variety of $\mathcal{B}$, we mean the union of the support varieties of all modules lying in $\mathcal{B}$.  It follows from general properties of support varieties that the support variety of $\mathcal{B}$ is equal to the union of the support varieties of all simple $G_{(r)}$-modules lying in $\mathcal{B}$.

The goal of this section is to use the results of the previous section, along with results from \cite{J2} and \cite{NPV}, to give an explicit description of these varieties.  For $\lambda \in X(T)_+$, let $\mathcal{B}_r(\lambda)$ denote the block of $Dist(G_{(r)})$ which contains the simple $G$-module $L(\lambda)$.  We observe that if $\lambda = \lambda_1 + p^r\lambda_2$ with $\lambda_1 \in X_r(T)$ and $\lambda_2 \in X(T)_+$, then as a $G_{(r)}$-module $L(\lambda)$ is isomorphic to the direct sum of $dim_k(L(\lambda_2))$ copies of $L(\lambda_1)$, the latter remaining simple upon restriction to $G_{(r)}$, thus $L(\lambda)$ does indeed lie in a single block.  We write $\mu \in \mathcal{B}_r(\lambda)$ if $L(\mu)$ lies in $\mathcal{B}_r(\lambda)$, so that $\mathcal{B}_r(\lambda)$ defines a subset of $X(T)_+$.  It is clear that $\mathcal{B}_r(\lambda) = \mathcal{B}_r(\mu)$ whenever $\mu \in \mathcal{B}_r(\lambda)$, and since every block has some simple $G_{(r)}$-module lying in it, then every block of $Dist(G_{(r)})$ can be given as $\mathcal{B}_r(\lambda)$ with $\lambda \in X_r(T)$.

Jantzen has shown (\cite[II.9.22]{J1}) how to determine the block $\mathcal{B}_r(\lambda)$ (and this result is true for arbitrary reductive groups): let $m$ be the smallest integer such that there is some $\alpha \in \Phi$ with $\langle \lambda + \rho, \alpha^{\vee} \rangle \notin \mathbb{Z}p^m$.  Then the block $\mathcal{B}_r(\lambda)$ contains $L(\mu)$ if and only if

\begin{equation}\label{link}
\mu \in \big(W \cdot \lambda + p^m\mathbb{Z}\Phi + p^rX(T)\big) \cap X(T)_+ .
\end{equation}

\bigskip
In applying this result, we note that for the groups under consideration in this section, the index of $\mathbb{Z}\Phi$ in $X(T)$ is $n$ if $G = SL_n$, and is $2$ if $G = Sp_{2n}$ (\cite[13.1]{Hum}).  Thus if we are assuming that $p>hc$, we have then $bX(T) \subseteq \mathbb{Z}\Phi$ for some integer $b$ which is relatively prime to $p$.  Therefore, if $m \le r$, we have

$$p^mX(T) \supseteq p^m\mathbb{Z}\Phi + p^rX(T) \supseteq  p^m(bX(T) + p^{r-m}X(T)) = p^mX(T),$$

\bigskip
\noindent so we can simplify (\ref{link}) to

\begin{equation}\label{newlink}
\mu \in \big(W \cdot \lambda + p^mX(T)\big) \cap X(T)_+ .
\end{equation}

\bigskip
Let us now recall how the work of Jantzen in \cite{J2}, and Nakano, Parshall, and Vella in \cite{NPV}, provide for the first Frobenius kernel of $G$ the description of the variety of the block $\mathcal{B}_1(\lambda)$.  As with the result for determining blocks, this holds for arbitrary reductive groups, but we will continue with our assumption that $G$ is $SL_n$ or $Sp_{2n}$.

Following the notation in \cite{NPV}, let $\Phi_{\lambda} := \{ \alpha \in \Phi \mid \langle \lambda + \rho, \alpha^{\vee} \rangle \in p\mathbb{Z} \}$.  Assuming that $p$ is good for $\Phi$ (which in particular is true if $p>hc$), it is observed in \cite[6.2]{NPV} that there exists $w \in W$ and a subset $I \subseteq \Pi$ such that $w(\Phi_{\lambda}) = \mathbb{Z}I \cap \Phi$.  Define $\mathfrak{u}_I \subseteq \mathfrak{g}$ to be the subalgebra generated by all $\text{Lie}(U_{-\alpha})$, where $\alpha$ is a positive root not contained in $\mathbb{Z}I$.  As proved in \cite{J2} for type $A$, and in \cite{NPV} for all other types in which $p$ is good:

\begin{equation}\label{induced}
V_{G_{(1)}}(\text{H}^0(\lambda)) = G \cdot \mathfrak{u}_I
\end{equation}

\bigskip
\noindent where $G$ acts on $\mathfrak{g}$ by the adjoint action.  Note that if $\mu \in \mathcal{B}_1(\lambda)$, then $\mu = w^{\prime} \cdot \lambda + p\beta$, for some $\beta \in X(T)$, so that $\Phi_{\mu} = w^{\prime}(\Phi_{\lambda})$.  Thus $w(w^{\prime})^{-1}(\Phi_{\mu}) = \mathbb{Z}I \cap \Phi$, and by (\ref{induced}) we have $V_{G_{(1)}}(\text{H}^0(\lambda)) = V_{G_{(1)}}(\text{H}^0(\mu))$.  Applying Theorem 4.6.1 of \cite{NPV}, and its proof, it follows that the support variety of $\mathcal{B}_1(\lambda)$ equals $G \cdot \mathfrak{u}_I$.

\bigskip
We can now proceed to state and prove the result for higher Frobenius kernels.

\begin{thm}\label{block}
Let $\lambda \in X_r(T)$, and let $\mathcal{B}_r(\lambda)$ denote the block of $Dist(G_{(r)})$ containing the simple $G$-module $L(\lambda)$, and assume that $p>hc$.  Write

\vspace{0.05 in}
\begin{center}$\lambda = \lambda_0 + \lambda_1p + \cdots + \lambda_{r-1} p^{r-1} , \, \lambda_i \in X_1(T)$\end{center}
\vspace{0.05 in}

\noindent and let $m>0$ be the smallest integer such that $\lambda_{m-1} \ne (p-1)\rho$.  Then the support variety of $\mathcal{B}_r(\lambda)$ is given by

\vspace{0.05 in}
\begin{center}$ \{ (\beta_0, \ldots, \beta_{r-1}) \in C_r(\mathcal{N}_p(\mathfrak{g})) \mid \beta_{r-m} \in V_{G_{(1)}}(\textup{H}^0(\lambda_{m-1})) \text{ and } \beta_i =0 \text{ if } i > r-m \}$.\end{center}
 
\end{thm}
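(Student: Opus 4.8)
The plan is to combine Theorem~\ref{simple} with an explicit, digit-by-digit description of $\mathcal{B}_r(\lambda)\cap X_r(T)$, and then to collapse the resulting union of support varieties to the single variety $V_{G_{(1)}}(\textup{H}^0(\lambda_{m-1}))$ using the block computation over $G_{(1)}$ recalled just before the theorem. One may assume $\lambda\ne(p^r-1)\rho$, so that $1\le m\le r$; in the excluded case $\mathcal{B}_r(\lambda)=\{L(\lambda)\}$ with $L(\lambda)$ projective over $G_{(r)}$, so the support variety is $\{0\}$, consistent with the stated formula read with $r-m<0$. The first step is to reconcile the two meanings of $m$. Since every coroot lies in the coroot lattice $\sum_i\mathbb{Z}\alpha_i^\vee$, the condition ``$p^{m-1}$ divides $\langle\lambda+\rho,\alpha^\vee\rangle$ for all $\alpha\in\Phi$'' is equivalent to the same condition for the simple coroots. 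Expanding $\langle\lambda+\rho,\alpha_i^\vee\rangle=(1+\langle\lambda_0,\alpha_i^\vee\rangle)+\langle\lambda_1,\alpha_i^\vee\rangle p+\cdots$ and using $0\le\langle\lambda_j,\alpha_i^\vee\rangle\le p-1$, a ``carry'' induction shows this holds for every $i$ precisely when $\lambda_0=\cdots=\lambda_{m-2}=(p-1)\rho$, while the failure of $p^m$-divisibility for some $\alpha$ then amounts to $\lambda_{m-1}\ne(p-1)\rho$; this identifies Jantzen's $m$ with the one in the statement. The same telescoping yields the factorization $\lambda+\rho=p^{m-1}\nu$ with $\nu:=\rho+\lambda_{m-1}+\lambda_m p+\cdots+\lambda_{r-1}p^{r-m}$, so in particular $\nu\equiv\lambda_{m-1}+\rho\pmod{pX(T)}$.

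Next I would pin down $\mathcal{B}_r(\lambda)\cap X_r(T)$. By the simplified linkage (\ref{newlink}) (available since $m\le r$), a weight $\mu=\sum_{i=0}^{r-1}\mu_ip^i\in X_r(T)$ lies in $\mathcal{B}_r(\lambda)$ iff $\mu+\rho=w(\lambda+\rho)+p^m\gamma=p^{m-1}\bigl(w(\nu)+p\gamma\bigr)$ for some $w\in W$ and $\gamma\in X(T)$. Running the carry argument of the first step on $\mu+\rho\in p^{m-1}X(T)$ forces $\mu_0=\cdots=\mu_{m-2}=(p-1)\rho$, whence $\mu+\rho=p^{m-1}\tilde\mu$ with $\tilde\mu:=\mu_{m-1}+\rho+\mu_mp+\cdots+\mu_{r-1}p^{r-m}$; cancelling $p^{m-1}$ (legitimate since $X(T)$ is torsion-free) gives $\tilde\mu=w(\nu)+p\gamma$, hence $\mu_{m-1}+\rho-w(\lambda_{m-1}+\rho)\in pX(T)$, i.e.\ $\mu_{m-1}\equiv w\cdot\lambda_{m-1}\pmod{pX(T)}$. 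Since $\lambda_{m-1}\in X_1(T)$ is not $(p-1)\rho$, its $G_{(1)}$-linkage class is $(W\cdot\lambda_{m-1}+pX(T))\cap X(T)_+$ (the relevant exponent in Jantzen's formula being $1$, so the $p\mathbb{Z}\Phi$-term is absorbed), so this says exactly $\mu_{m-1}\in\mathcal{B}_1(\lambda_{m-1})$. Conversely, reversing the computation shows that for any $\mu_{m-1}\in\mathcal{B}_1(\lambda_{m-1})\cap X_1(T)$ and any $\mu_m,\dots,\mu_{r-1}\in X_1(T)$, the weight $\mu$ with these higher digits and $\mu_0=\cdots=\mu_{m-2}=(p-1)\rho$ lies in $\mathcal{B}_r(\lambda)$. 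Thus $\mathcal{B}_r(\lambda)\cap X_r(T)$ is freely parametrized by a choice of $\mu_{m-1}\in\mathcal{B}_1(\lambda_{m-1})\cap X_1(T)$ together with arbitrary $\mu_m,\dots,\mu_{r-1}\in X_1(T)$.

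Finally I would assemble the union. The variety of $\mathcal{B}_r(\lambda)$ is $\bigcup_\mu V_{G_{(r)}}(L(\mu))$ over $\mu\in\mathcal{B}_r(\lambda)\cap X_r(T)$, and Theorem~\ref{simple} gives $V_{G_{(r)}}(L(\mu))=\{(\beta_0,\dots,\beta_{r-1})\in C_r(\mathcal{N}_p(\mathfrak{g}))\mid\beta_i\in V_{G_{(1)}}(L(\mu_{r-i-1}))\ \text{for all }i\}$. For $i>r-m$ one has $r-i-1<m-1$, so $\mu_{r-i-1}=(p-1)\rho$ and $L((p-1)\rho)$ is the Steinberg module of $G_{(1)}$, which is projective, forcing $\beta_i=0$; for $i=r-m$ the condition is $\beta_{r-m}\in V_{G_{(1)}}(L(\mu_{m-1}))$; for $i<r-m$ it is $\beta_i\in V_{G_{(1)}}(L(\mu_{r-i-1}))$ with $\mu_{r-i-1}$ ranging over all of $X_1(T)$. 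Now union over the parametrization of the second step: since the higher digits $\mu_m,\dots,\mu_{r-1}$ vary independently and $V_{G_{(1)}}(L(0))=\mathcal{N}_p(\mathfrak{g})$, the union over them (product of unions equals union of products, then intersect with $C_r(\mathcal{N}_p(\mathfrak{g}))$) removes every constraint on $\beta_0,\dots,\beta_{r-m-1}$; and the union over $\mu_{m-1}\in\mathcal{B}_1(\lambda_{m-1})\cap X_1(T)$ of the varieties $V_{G_{(1)}}(L(\mu_{m-1}))$ is by definition the support variety of the block $\mathcal{B}_1(\lambda_{m-1})$, which by (\ref{induced}) together with \cite[4.6.1]{NPV} equals $V_{G_{(1)}}(\textup{H}^0(\lambda_{m-1}))$. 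Combining these, the support variety of $\mathcal{B}_r(\lambda)$ is exactly $\{(\beta_0,\dots,\beta_{r-1})\in C_r(\mathcal{N}_p(\mathfrak{g}))\mid\beta_{r-m}\in V_{G_{(1)}}(\textup{H}^0(\lambda_{m-1}))\text{ and }\beta_i=0\text{ for }i>r-m\}$, as claimed.

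The main obstacle I expect is the second step: correctly translating the single ``mod $p^mX(T)$'' linkage relation for $G_{(r)}$ into digit-wise information, keeping accurate track of the factor $p^{m-1}$, recognizing $(p-1)\rho$ as the ``maximal carry'' digit, and verifying that letting $w$ run through the $G_{(r)}$-linkage class corresponds precisely to the $G_{(1)}$-block condition on the single digit $\mu_{m-1}$ while the higher digits stay completely free. The bookkeeping in the last step --- in particular the product-of-unions manipulation and the appeal to the independence of the higher digits --- is then routine given the explicit parametrization.
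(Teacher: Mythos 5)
Your proof is correct and follows essentially the same route as the paper: identify the two meanings of $m$ via a carry argument (which the paper leaves largely implicit in the factorization $\lambda=\rho(p^{m-1}-1)+\lambda_{m-1}p^{m-1}+\sigma p^m$), parametrize $\mathcal{B}_r(\lambda)\cap X_r(T)$ digit-by-digit with the first $m-1$ digits forced to $(p-1)\rho$ and the $(m-1)$-st digit ranging over $\mathcal{B}_1(\lambda_{m-1})\cap X_1(T)$, then apply Theorem~\ref{simple} and the $G_{(1)}$ block-variety computation from \cite{J2}, \cite{NPV}. The only cosmetic difference is that you leave the higher digits $\mu_m,\dots,\mu_{r-1}$ free and absorb them in the union via $V_{G_{(1)}}(L(0))=\mathcal{N}_p(\mathfrak{g})$, whereas the paper first truncates $\mu$ to $X_m(T)$ using the inclusion $V_{G_{(r)}}(L(\mu))\subseteq V_{G_{(r)}}(L(\mu'))$.
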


\vspace{0.05 in}
\begin{proof}
We can write $\lambda = \rho(p^{m-1} -1) + \lambda_{m-1}p^{m-1} + \sigma p^m$, where 

$$\sigma = \lambda_m + \lambda_{m+1}p + \cdots + \lambda_{r-1}p^{r-m-1}$$

For any $w \in W$, we have

\begin{eqnarray*}
w \cdot (\lambda) & = & w\big(\rho(p^{m-1} - 1) + \lambda_{m-1}p^{m-1} + \sigma p^m + \rho \big) - \rho \\
& = &  w(\rho + \lambda_{m-1})p^{m-1} + w(\sigma) p^m - \rho \\
& = & \big(w(\rho + \lambda_{m-1}) - \rho \big)p^{m-1} + \rho(p^{m-1} - 1) + w(\sigma) p^m\\
& = & \rho(p^{m-1} - 1) + (w \cdot \lambda_{m-1}) p^{m-1} + w(\sigma) p^m\\
\end{eqnarray*}

By (\ref{newlink}), $L(\mu)$ is in the block $\mathcal{B}_r(\lambda)$ if and only if $\mu$ is both dominant and in $W \cdot \lambda + p^mX(T)$.  Write $\mu = \mu_0 + \mu_1p + \cdots + \mu_q p^q, \, \mu_i \in X_1(T)$.  Having just calculated $w \cdot \lambda$, it follows that if $\mu \in \mathcal{B}_r(\lambda)$, then $\mu_i = \rho(p -1)$, if $0 \le i < m-1$, and $\mu_{m-1} \in \big(W \cdot \lambda_{m-1} + pX(T) \big) \cap X(T)_+ = \mathcal{B}_1(\lambda_{m-1})  \cap X(T)_+$.  We further note that by subtracting $\mu_mp^m + \cdots + \mu_q p^q \in p^mX(T)$ from $\mu$, we have that $L(\mu^{\prime})$ is also in $\mathcal{B}_r(\lambda)$, where $\mu^{\prime} = \mu_0 + \mu_1p + \cdots + \mu_{m-1} p^{m-1}$.  By Theorem \ref{simple} it is clear that $V_{G_{(r)}}(L(\mu)) \subseteq V_{G_{(r)}}(L(\mu^{\prime}))$.  Thus we see that the support variety of the block $\mathcal{B}_r(\lambda)$ is given by the union of the support varieties of the modules $L(\mu)$ with $\mu \in \big(W \cdot \lambda + p^mX(T)\big) \cap X_m(T)$.  By the observations above, 

$$\big(W \cdot \lambda + p^mX(T)\big) \cap X_m(T) = \{ \rho(p^{m-1}-1) + \sigma p^{m-1} \mid \sigma \in \mathcal{B}_1(\lambda_{m-1}) \cap X_1(T) \}.$$

\bigskip
Let $\mathcal{B}^{\prime}_1(\lambda_{m-1}) = \mathcal{B}_1(\lambda_{m-1}) \cap X_1(T)$.  Summarizing, we have that the support variety of the block $\mathcal{B}_r(\lambda)$ is equal to

\begin{eqnarray*}
& = & \bigcup_{\sigma \in \mathcal{B}^{\prime}_1(\lambda_{m-1})} V_{G_{(r)}}\left( L(\rho(p^{m-1}-1)) \otimes L(\sigma)^{(m-1)} \right)\\
\, &  & \\
& = & V_{G_{(r)}}\left(L(\rho(p^{m-1}-1))\right) \,\; \bigcap \,\; \left( \bigcup_{\sigma \in \mathcal{B}^{\prime}_1(\lambda_{m-1})} V_{G_{(r)}}\left(L(\sigma)^{(m-1)}\right) \right)\\
\end{eqnarray*}

\bigskip
If $m=1$, then $L(\rho(p^{m-1}-1)) = L(0)$, which is the trivial $G_{(r)}$-module, otherwise if $m \ge 2$, 

\vspace{0.05 in}
\begin{center}$L(\rho(p^{m-1}-1)) \cong L(\rho(p-1)) \otimes L(\rho(p-1))^{(1)} \otimes \cdots \otimes L(\rho(p-1))^{(m-2)}.$\end{center}
\vspace{0.05 in}

Since $L(\rho(p-1))$ is projective over $G_{(1)}$, we have $V_{G{(1)}}(L(\rho(p-1))) = \{ 0 \}$.  Thus, for all $m \ge 1$ we have by Theorem \ref{simple},

\vspace{0.05 in}
\begin{center}$V_{G_{(r)}}(L(\rho(p^{m-1}-1))) = \{ (\beta_0, \ldots, \beta_{r-m},0,\ldots,0) \in C_r(\mathcal{N}_p(\mathfrak{g})) \}.$\end{center}
\vspace{0.05 in}

The calculation of the support variety of $ \mathcal{B}_1(\lambda_{m-1})$ over $G_{(1)}$ together with Theorem \ref{simple} implies that

$$\bigcup_{\sigma \in \mathcal{B}^{\prime}_1(\lambda_{m-1})} V_{G_{(r)}}(L(\sigma)^{(m-1)}) = \{ \underline{\beta} \in C_r(\mathcal{N}_p(\mathfrak{g})) \mid \beta_{r-m} \in \Phi_{\lambda_{m-1}} \},$$

\noindent which proves the theorem.
\end{proof}

\begin{remark}
As observed in the comments at the beginning of Section 4.6 of \cite{NPV}, the module $\text{H}^0(\lambda)$ lies in the block $\mathcal{B}_r(\lambda)$, thus this theorem provides an upper bound on $V_{G_{(r)}}(\text{H}^0(\lambda))$.  In the case that $G = SL_2$ and $r \ge 1$, it follows from the calculations in \cite[6.10]{SFB2} that the support variety of $\text{H}^0(\lambda)$ is in fact equal to the support variety of $\mathcal{B}_r(\lambda)$.  The same is true in the case $r=1$ and $G$ a reductive group, as follows from the work in \cite{NPV}.  Possibly this equality holds for arbitrary $r$ and $G$ reductive (or at least a classical simple group), though we will stop short of officially stating this as a conjecture.
\end{remark}

\subsection{Acknowledgments} This paper is the continuation of work which appeared in the author's Ph.D. thesis, and we gratefully acknowledge the influence and help provided by Eric Friedlander, who served as our thesis advisor.  We also thank Julia Pevtsova, as well as the reviewer, for many helpful suggestions and comments, and we acknowledge useful conversations with Jim Humphreys.  This research was partially supported by grants from the Australian Research Council (DP1095831, DP0986774 and DP120101942).

\vspace{.2 in}
\noindent\tiny{DEPARTMENT OF MATHEMATICS \& STATISTICS, UNIVERSITY OF MELBOURNE, PARKVILLE, VIC 3010, AUSTRALIA}\\
paul.sobaje@unimelb.edu.au\\
Phone: \text{+}61 \, 401769982


\begin{thebibliography}{20}

\bibitem{CLN} J. Carlson, Z. Lin, and D. Nakano, {\em Support Varieties for modules over Chevalley groups and classical Lie algebras}, Trans. A.M.S. {\bf 360} (2008), 1870-1906.

\bibitem{DNP} C. Drupieski, D. Nakano, B. Parshall, {\em Differentiating the Weyl generic dimension formula with applications to support varieties}, Adv. Math. {\bf 229} (2012), 2656-2668.

\bibitem{F} E. Friedlander, {\em Restrictions to $G(\mathbb{F}_p)$ and $G_{(r)}$ of rational $G$-modules}, Compositio Math., (2011).

\bibitem{FP} E. Friedlander, J. Pevtsova, {\em Representation-theoretic support spaces for finite group schemes}, Amer. J. Math. {\bf 127} (2005), 379-420.

\bibitem{FS} E. Friedlander, A. Suslin, {\em Cohomology of finite group schemes over a field}, Invent. Math. {\bf 127} (1997), 209-270.

\bibitem{Hum} J. Humphreys, {\em Introduction to Lie Algebras and Representation Theory}, Springer-Verlag, 1972.


\bibitem{J1} J.C. Jantzen, {\em Representations of Algebraic Groups}, 2nd ed. Mathematical Surveys and Monographs, 107, American Mathematical Society 2003.

\bibitem{J2} J.C. Jantzen, {\em Support Varieties of Weyl Modules}, Bull. London Math. Soc. {\bf 19} (1987), 238-244.

\bibitem{N} D.K. Nakano, {\em A bound on the complexity for $G_rT$ modules}, Proc. A.M.S. {\bf 123} (1995), 335-341.

\bibitem{NPV} D.K. Nakano, B.J. Parshall, and D.C. Vella, {\em Support varieties for algebraic groups}, J. Reine Angew. Math. {\bf 547} (2002), 15-49. 

\bibitem{S} B. Shu, {\em Representations of finite Lie algebras and geometry of reductive Lie algebras}. Proceedings of the International Conference on Complex Geometry and Related Fields, 277-287, AMS/IP Stud. Adv. Math., {\bf 39}, Amer. Math. Soc., 2007.

\bibitem {SFB1} A. Suslin, E. Friedlander, and C. Bendel, {\em Infinitesimal 1-parameter subgroups and cohomology}, Journal of the A.M.S. {\bf 10} (1997), 693-728. 

\bibitem {SFB2} A. Suslin, E. Friedlander, and C. Bendel, {\em Support varieties for infinitesimal group schemes}, Journal of the A.M.S. {\bf 10} (1997), 729-759. 

\end{thebibliography}
\end{document}